\numberwithin{equation}{section}
\newtheorem{thm}{Theorem}[section]
\newtheorem{lemma}[thm]{Lemma}
\theoremstyle{remark} 
\newtheorem{remark}[]{Remark}
\newcommand{\EE}{\mathbb{E}}
\newcommand{\PP}{\mathbb{P}}
\newcommand{\RR}{\mathbb{R}}
\newcommand{\CC}{\mathbb{C}}
\newcommand{\ol}{\overline}
\newcommand{\e}{\varepsilon}
\newcommand{\be}{\begin{equation}}
\newcommand{\ee}{\end{equation}}
\begin{document}

\title[Valence of harmonic polynomials]{The valence of harmonic polynomials viewed through the probabilistic lens}
\date{}

\author[E. Lundberg]{Erik Lundberg}
\address{Department of Mathematical Sciences,
Florida Atlantic University, Boca Raton, FL 33431}
\email{elundber@fau.edu}
%    \thanks will become a 1st page footnote.
\thanks{The author acknowledges support from the Simons Foundation (grant 712397).}

\begin{abstract}
We prove the existence of complex polynomials $p(z)$ of degree $n$ and $q(z)$ of degree $m<n$ such that the harmonic polynomial $ p(z) + \overline{q(z)}$ has at least $\lceil n \sqrt{m} \rceil$ many zeros.  
This provides an array of new counterexamples to Wilmshurst's conjecture that the maximum valence of harmonic polynomials $p(z)+\overline{q(z)}$ taken over polynomials $p$ of degree $n$ and $q$ of degree $m$ is $m(m-1)+3n-2$.  More broadly, these examples show that there does not exist a linear (in $n$) bound on the valence with a uniform (in $m$) growth rate.  The proof of this result uses a probabilistic technique based on estimating the average number of zeros of a certain family of random harmonic polynomials.
\end{abstract}

\maketitle

\section{Introduction}

T. Sheil-Small \cite{SS} posed the problem of determining the maximum valence (maximum number of preimages of a given point) of complex-valued harmonic polynomials $h(z)=p(z) + \ol{q(z)}$ with $n:=\deg p > m := \deg q > 0$. He conjectured that the maximum is $n^2$.
In his thesis \cite{W1}, A. S. Wilmshurst used the maximum principle for harmonic functions to show that the valence of $p(z) + \ol{q(z)}$ is finite, and he then applied Bezout's theorem to confirm Sheil-Small's conjecture that the valence is at most $n^2$.  He also provided examples with $m=n-1$ where this upper bound is attained.

Wilmshurst conjectured the following improvement on this upper bound for each $m<n-1$. For each pair of integers $n>m>1$, let $N(n,m) := \max | \{z\in \CC : p(z) + \ol{q(z)} = 0\} |$ denote the maximum valence, where the maximum is taken over polynomials $p$ of degree $n$ and $q$ of degree $m$.
Then Wilmshurst's conjecture \cite{W1} (cf. \cite{W2}) for the maximum valence is
\begin{equation}\label{eq:Wilmshurst}
N(n,m) = 3n-2 + m(m-1).
\end{equation}

This 1994 conjecture was stated in \cite[Remark 2]{W2} and was discussed further in \cite{SS}.
It is also mentioned in the list of open problems in \cite{BL2010}.
For $m=n-1$ the conjecture states that the maximum valence is $N(n,n-1)=n^2$
which holds true by Wilmshurst's theorem along with the examples given in \cite{W2} showing that this bound is sharp (shown independently in \cite{BHS1995}).
For $m=1$, the upper bound was proved by D. Khavinson and G. Swiatek \cite{KhSw} using holomorphic dynamics.
A proof of the Crofoot-Sarason conjecture using further tools from holomorphic dynamics \cite{G2008} established that this bound is sharp (which had been verified previously for finitely many values of $n$ in \cite{BL2004}).  Together with the Khavinson-Swiatek result, this verified Wilmshurst's conjecture for $m=1$.
In a related direction, it was shown in \cite{SeteZur} that each valence $k$ can be realized by some (possibly high degree) choice of $p$ and $q$.

\begin{remark}
The above-mentioned use of holomorphic dynamics as an indirect technique, introduced in \cite{KhSw}, was further developed and led to elegant solutions to a variety of problems in potential theory, complex analysis, and mathematical physics 
\cite{KN},
\cite{KhLu2010}, \cite{LeeMakarov}, \cite{BergErem},
\cite{BergErem2018}.  In particular, Khavinson and Neumann used this technique to study the valence of harmonic rational functions, resolving a conjecture in astronomy \cite{Rhie} on the maximum number of images of a single background source that can be produced by a gravitational lens \cite{KN} (the gravitational lensing problem was investigated further in \cite{Bleher}, \cite{SeteCMFT}, \cite{SeteGrav}, \cite{SetePert}, \cite{Zur2018a}, \cite{Zur2018b}).  Lee and Makarov used the technique, along with quasiconformal surgery, to provide sharp estimates for the topology of quadrature domains \cite{LeeMakarov}.  Bergweiler and Eremenko used the technique along with elliptic function theory to give a greatly simplified proof \cite{BergErem} of Lin and Wang's classification of the number of critical points of Green's function on a torus \cite{LinWang}.
\end{remark}

After Wilmshurst's conjecture had been confirmed for the top ($m=n-1$) and bottom ($m=1$) cases, it came as somewhat of a surprise when counterexamples were discovered in \cite{LLL}.
For $m=n-3$, Wilmshurst's conjecture states $N(n,m) = 3n-2 + m(m-1) = n^2 - 4n + 10$,
and counterexamples were provided in \cite{LLL} having valence greater than $n^2 - 3 n + O(1)$ as $n \rightarrow \infty$.
Further counterexamples were provided in \cite{LS} for $m=n-2$ again having valence that exceeds the conjecture by a margin that increases linearly in $n$.
Additional counterexamples were provided in \cite{HLLM} for certain specific choices of $m,n$ using computer-assisted proof (numerical root finding with a posteriori validation).

In spite of these counterexamples, it still seems likely that the main spirit of Wilmhurst's conjecture is correct, and $N(m,n)$ increases at most linearly in $n$ for each $m$ fixed.
However, it was suggested in \cite{LLL} that if such a linear-in-$n$ estimate does hold, then contrary to the form of Wilmshurst's conjecture we may expect the growth rate to depend on $m$.  
The goal of the current paper is to confirm this by providing new counterexamples that show
\be\label{eq:limsup}
\limsup_{m \rightarrow \infty} \limsup_{n \rightarrow \infty} \frac{N(n,m)}{n} = \infty,
\ee
as opposed to Wilmshurst's conjecture where $\limsup_{m \rightarrow \infty} \limsup_{n \rightarrow \infty} \frac{N(n,m)}{n} = 3$ has a finite value.

The statement \eqref{eq:limsup} follows from the following more precise result.

\begin{thm}\label{thm:main}
For each pair of integers $n>m \geq 1$ there exist complex polynomials $p(z),q(z)$ with $p$ of degree $n$ and $q$ of degree $m$ such that the valence of $p(z) + \ol{q(z)}$ is at least $\lceil n\sqrt{m} \rceil$.
\end{thm}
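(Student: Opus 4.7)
The plan is to apply the probabilistic method: for a carefully chosen Gaussian ensemble on the pair $(p,q)$ with $\deg p=n$ and $\deg q=m$, I would show that the expected valence $\EE[N(p+\ol q)]$ is at least $\lceil n\sqrt m\rceil$. Existence of a deterministic pair realizing at least this valence then follows immediately, since any random variable equals or exceeds its expectation on a set of positive probability. This bypasses the difficulty of explicitly constructing counterexamples, which for $m\ge 4$ has so far resisted direct attack.

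For the ensemble I would take the Kostlan (elliptic) Gaussian model,
\begin{equation*}
p(z)=\sum_{k=0}^n a_k\sqrt{\tbinom{n}{k}}\,z^k,\qquad q(z)=\sum_{k=0}^m b_k\sqrt{\tbinom{m}{k}}\,z^k,
\end{equation*}
with $\{a_k\},\{b_k\}$ independent standard complex Gaussians. The virtue of this ensemble is that the covariance kernels of $p$ and $q$ are invariant under the unitary action of $\mathrm{SU}(2)$ on the Riemann sphere, so that the Kac--Rice intensity of zeros of $h=p+\ol q$ is a rotationally symmetric function of $|z|$ that admits a tractable closed form. Alternative models (for example, random $p$ paired with a carefully weighted deterministic $q$) could also be tried, and part of the design task is to tune the relative variances of $p$ and $q$ so that the two terms interact in a way that produces many zeros on average.

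The main tool is the Kac--Rice formula, which in this setting reads
\begin{equation*}
\EE[N(h)]=\int_\CC \EE\!\bigl[\,|J_h(z)|\,\bigm|\,h(z)=0\,\bigr]\,f_{h(z)}(0)\,dA(z),
\end{equation*}
where $J_h(z)=|p'(z)|^2-|q'(z)|^2$ is the Jacobian and $f_{h(z)}(0)$ is the density at the origin of the Gaussian vector $h(z)$. Because $p$ and $q$ are independent Gaussians, the joint law of $(h(z),p'(z),q'(z))$ has a covariance matrix depending only on $r=|z|$, and conditioning on $h(z)=0$ produces an explicit Gaussian on $(p'(z),q'(z))$. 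Rotational symmetry reduces the double integral to a one-dimensional integral in $r\in(0,\infty)$.

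The principal technical obstacle is the conditional expectation of $|J_h|$. Since $J_h$ is the difference of two dependent chi-squared-type variables rather than a Gaussian, the Edelman--Kostlan determinantal identity does not apply directly. My plan is to split the integrand over the sense-preserving locus $\{|p'|>|q'|\}$ and the sense-reversing locus $\{|p'|<|q'|\}$, on each of which the absolute value may be removed and the conditional expectation becomes a genuine Gaussian second moment that can be evaluated explicitly. After this reduction, the remaining task is to show that the bulk of the integral is contributed by a thin annulus where the variances of $p(z)$ and $q(z)$ are of comparable size, where a transition-region estimate yields a leading contribution of order $n\sqrt m$. Promoting this asymptotic estimate to the stated pointwise inequality $\EE[N(h)]\ge\lceil n\sqrt m\rceil$ valid for \emph{every} $n>m\ge 1$ (rather than only in the limit) will require careful tracking of constants and of the error terms that arise from the annular localization, and this is where the technical heart of the argument lies.
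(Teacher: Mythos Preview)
Your proposal is a plan rather than a proof: the Kac--Rice integral is set up but not evaluated, and you acknowledge that the ``technical heart''---showing the integral is at least $\lceil n\sqrt m\rceil$ for \emph{every} $n>m\ge 1$---remains to be done. More seriously, the specific ensemble you propose (independent complex Kostlan $p$ and $q$) is precisely the Li--Wei model, whose expected valence has already been worked out. For $m$ fixed and $n\to\infty$ that expectation is asymptotic to $n$, not to $n\sqrt m$: when $p$ and $q$ are independent with $\deg p\gg\deg q$, the term $p$ dominates $\bar q$ outside a shrinking region, the zero set of $p+\bar q$ is a mild perturbation of the $n$ zeros of $p$, and the expected number of sense-reversing zeros stays bounded in $n$. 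Rescaling the relative variances does not change this, since the sense-reversing locus $\{|q'|>|p'|\}$ is still controlled by $m$. Thus the approach, as written, does not deliver the theorem in the regime $m$ fixed, $n$ large---which is exactly the regime where $\lceil n\sqrt m\rceil$ beats Wilmshurst's conjectured bound and the result has content.

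The paper does use the probabilistic method, but with a fundamentally different design that your plan is missing. Rather than taking $p$ independent of $q$, the paper sets $p(z)=\varepsilon z^n+q(z)$ with $q$ complex Kostlan of degree $m$ and $\varepsilon>0$ small and deterministic. Then
\[
h(z)=p(z)+\overline{q(z)}=\varepsilon z^n+2\,\Re q(z),
\]
so $\Im h(z)=\varepsilon\,\Im(z^n)$ is \emph{deterministic} and vanishes on $n$ equally spaced lines through the origin. The zeros of $h$ are therefore the real zeros of the restrictions of $\Re h$ to these $n$ lines. Each restriction is, up to the perturbation $\varepsilon r^n$, a \emph{real} Kostlan polynomial of degree $m$, whose expected number of real zeros is $\sqrt m$ by the Edelman--Kostlan formula. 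A Bulinskaya-type stability lemma handles the perturbation, and linearity of expectation over the $n$ lines gives $\EE N\ge n(\sqrt m-\varepsilon_0)$ for $\varepsilon$ small enough. No two-dimensional Kac--Rice computation is needed; the coupling $p=\varepsilon z^n+q$ collapses the planar problem into $n$ one-dimensional ones, and that reduction is the key idea.
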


\begin{remark}
The quantity $\lceil n\sqrt{m} \rceil$ exceeds Wilmshurst's conjecture  for each fixed $m>9$ for all $n$ sufficiently large, and more generally when $m$ grows slowly with $n$, namely, $m = O(n^\alpha)$ for $0 \leq \alpha <1/2$.
For all $n>m$, it was shown in \cite{KhavLeeSaez} that there are examples with at least $m^2 + n + m$ many zeros; while this number does not exceed Wilmshurst's conjecture, it shows that there is substantial room for improvement in Theorem \ref{thm:main} in the range where $m$ grows more quickly than $n^{1/2}$.  It seems reasonable to suspect the existence of examples with $n\cdot m$ many zeros.
\end{remark}

The proof of Theorem \ref{thm:main} is probabilistic and rests on an estimate for the expectation of the number of zeros for a carefully chosen model of random harmonic polynomials.
We sketch the main ideas of the proof here.  In the model of random polynomials we will consider, the polynomial $q$ is sampled from the so-called (complex) Kostlan ensemble, a popular Gaussian model of random polynomials.  We then take $p$ to be a (deterministic) perturbation of $q$, namely, $p(z) = \e z^n + q(z)$.
With this choice of $p$, the imaginary part of $h(z) = p(z) + \ol{q(z)}$ is $\e \Im \{z^n \}$ which vanishes on $n$ equally spaced lines through the origin.  The restriction of the real part of $h(z)$ to any one of these lines is distributed as a perturbation of a random polynomial of degree $m$ sampled from the \emph{real} Kostlan ensemble, and its average number of zeros can be estimated using a stability argument when the size $\e>0$ of the perturbation is small. 
Using linearity of expectation, the sum of these individual estimates gives a lower bound for the expected total number of zeros of $h$.  The statement in the theorem will then follow from the simple general fact that there must exist at least one point in parameter space achieving the average. 
See Section \ref{sec:main} for details of the proof (and Section \ref{sec:Kostlan} for preliminary results on the real and complex Kostlan ensembles).

\begin{remark}
The term ``probabilistic lens'' refers to the application of indirect probabilistic techniques for proving deterministic results (this terminology is especially used in Combinatorics where the method is well-established \cite{Alon}).
The current paper is the first instance of this method in the study of the valence of harmonic polynomials;
while there have been a number of papers investigating the average valence of random harmonic polynomials (collectively spanning a variety of models) \cite{LiWei}, \cite{Lerariotruncated},
\cite{Andy}, \cite{AndyZach},
the current paper is the first of these that leads to a new deterministic result.\footnote{Of course, in \emph{other} areas of harmonic function theory, there have already been several well-known indirect probabilistic proofs of deterministic results, often relying on potential theoretic properties of Brownian motion, see \cite{Bass}.} %of the probabilistic lens in the study of the valence of harmonic polynomials.   Those results consider a variety of models, but  the average number of zeros for a variety of models of random harmonic polynomials. those results take a probabilistic perspective for its intrinsic did not lead to any new deterministic statements, and 
\end{remark}

We prove Theorem \ref{thm:main} in Section \ref{sec:main} after we
briefly review the construction and properties of the real and complex Kostlan ensembles in Section \ref{sec:Kostlan}.
We end the paper with some concluding remarks in Section \ref{sec:concl}.

\noindent {\bf Acknowledgments.}
The author thanks the anonymous referee for a careful reading and many helpful comments and corrections that improved the exposition and the clarity of the proofs.

\section{The real and complex Kostlan ensembles}\label{sec:Kostlan}

In this section we review the basic properties of the real and complex Kostlan ensembles of random polynomials.

The (real) Kostlan ensemble can be described by sampling a random polynomial $f$ with independent Gaussian coefficients $\alpha_k$
\be\label{eq:KostlanReal}
f(x)=\sum_{k=0}^m \alpha_k x^k, \quad \alpha_k \sim  N \left(0,\binom{m}{k}\right).
\ee
where the notation
$\alpha_k \sim  N \left(0,\binom{m}{k}\right)$ means that $\alpha_k$ is a (real) Gaussian random variable with mean zero and variance $\binom{m}{k}$.

We will need the following result on the average number of zeros of a real Kostlan polynomial $f$ over an interval of the real line.
For an elegant geometric proof of this result, we refer the reader to \cite[Sec. 3.1.2]{EdelmanKostlan95}.

\begin{lemma}\label{lemma:EK}
Let $f$ be a random real polynomial of degree $m$ sampled from the Kostlan ensemble.
Let $N_f(a,b)$ denote the average number of zeros of $f$ over the interval $(a,b)$.  Then the expectation of $N_f(a,b)$ is given by
\begin{align*}
\EE N_f(a,b) &= \frac{\sqrt{m}}{\pi} \int_a^b \frac{1}{1+t^2} dt \\ 
&= \frac{\sqrt{m}}{\pi} \left[ \arctan(b) - \arctan(a) \right].
\end{align*}
\end{lemma}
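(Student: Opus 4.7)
The plan is to apply the Kac--Rice formula, which expresses the expected number of zeros of a centered Gaussian process $f$ on an interval as the integral of a density $\rho(t)$ depending only on the covariance kernel $K(s,t) := \EE[f(s)f(t)]$. The convenient closed form due to Edelman and Kostlan states that
\begin{equation*}
\rho(t) = \frac{1}{\pi}\sqrt{\partial_s \partial_t \log K(s,t)\big|_{s=t}},
\end{equation*}
so that $\EE N_f(a,b) = \int_a^b \rho(t)\, dt$. Once this reduction is available, the lemma becomes a short direct calculation.

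First I would compute $K$. Since the coefficients $\alpha_k$ in \eqref{eq:KostlanReal} are independent with variance $\binom{m}{k}$, the binomial theorem immediately gives
\begin{equation*}
K(s,t) = \sum_{k=0}^m \binom{m}{k} s^k t^k = (1+st)^m.
\end{equation*}
Then $\log K(s,t) = m\log(1+st)$, and a one-line differentiation yields $\partial_s\partial_t \log K(s,t) = m/(1+st)^2$, which at $s=t$ equals $m/(1+t^2)^2$. Substituting into the density formula gives $\rho(t) = \sqrt{m}/\bigl(\pi(1+t^2)\bigr)$, and the antiderivative $\arctan t$ of $1/(1+t^2)$ finishes the evaluation.

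The only genuine obstacle is the justification of the Edelman--Kostlan density formula itself. This follows from the Kac--Rice identity $\rho(t) = \varphi_{f(t)}(0)\,\EE[\,|f'(t)| \mid f(t) = 0\,]$ (with $\varphi_{f(t)}$ the Gaussian density of $f(t)$), together with the observation that for the jointly Gaussian pair $(f(t), f'(t))$ the variance of $f(t)$, the covariance of $f(t)$ with $f'(t)$, and the variance of $f'(t)$ are recovered from $K(t,t)$, $\partial_s K(s,t)|_{s=t}$, and $\partial_s\partial_t K(s,t)|_{s=t}$ respectively; a short algebraic manipulation of these quantities collapses into the logarithmic derivative above. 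An alternative and arguably more transparent derivation, which is the route taken in \cite[Sec.\ 3.1.2]{EdelmanKostlan95}, samples $f$ as a uniformly random hyperplane through the origin in $\RR^{m+1}$ and, under the Veronese-type map $t \mapsto (1,t,\ldots,t^m)/(1+t^2)^{m/2}$, views the real zeros of $f$ as intersections of this hyperplane with a spherical curve whose arc-length element one computes directly to be $\sqrt{m}\, dt/(1+t^2)$; integral geometry then identifies $\EE N_f(a,b)$ with $1/\pi$ times the length of that curve over $(a,b)$, reproducing the stated formula.
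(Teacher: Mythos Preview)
Your argument is correct. Note, however, that the paper does not actually prove this lemma: it simply cites \cite[Sec.~3.1.2]{EdelmanKostlan95} for the geometric proof. Your primary route via the Kac--Rice/Edelman--Kostlan logarithmic-derivative formula is therefore a genuinely different derivation from the one the paper points to; it trades the integral-geometry picture (random hyperplane meeting the Veronese curve on the sphere) for a direct covariance computation, which is shorter and entirely self-contained once one accepts the density formula $\rho(t) = \pi^{-1}\sqrt{\partial_s\partial_t \log K(s,t)|_{s=t}}$. The alternative you sketch in your final paragraph is exactly the argument the paper is referencing, so you have in fact covered both approaches.
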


The complex Kostlan ensemble can be described similarly where the coefficients $c_k$ are now independent \emph{complex} Gaussians 
\be\label{eq:KostlanComplex}
q(z)=\sum_{k=0}^m c_k z^k, \quad c_k \sim N_\CC\left(0,\binom{m}{k}\right),
\ee
where the notation
$c_k \sim  N_\CC \left(0,\binom{m}{k}\right)$ means that $c_k$ is a complex Gaussian random variable with mean zero and variance $\binom{m}{k}$.
Recall that the complex Gaussian $c_k \sim N_\CC \left(0,\binom{m}{k}\right)$ can be expressed as a sum 
\be\label{eq:ck}
c_k = a_k + i b_k
\ee
where $a_k,b_k \sim N(0,\frac{1}{2} \binom{m}{k})$ are independent real Gaussians.

\begin{remark}
While the binomial coefficient variances appearing in \eqref{eq:KostlanReal} and \eqref{eq:KostlanComplex} might initially seem strange, the Kostlan model is often considered the most natural model from the perspective of the following higher ground. Each Gaussian model of random polynomials is determined by specifying a choice of inner product on the vector space of polynomials up to degree $n$; the Kostlan ensemble corresponds to the Gaussian measure induced by the Bombieri product (a.k.a. the Fischer product),
which is just the $L^2$ inner product associated to integration with respect to the natural Fubini-Study metric on projective space.
As a consequence of the unitary-invariance of the Fubini-Study metric, the Bombieri product, and the zero set of $f$ as well, is invariant under unitary transformations of projective space.  Moreover, the complex Kostlan ensemble is the \emph{unique} (up to multiplcation by a scalar) unitarily-invariant Gaussian ensemble of random complex polynomials \cite[Ch. 12]{BCSS}.
For this reason it is sometimes natural, although we will not do it in this paper, to switch to projective coordinates while working with the Kostlan ensemble.
\end{remark}

\section{Proof of Theorem \ref{thm:main}}\label{sec:main}

We fix attention on a special class of harmonic polynomials $p(z) + \ol{q(z)}$ where $p(z)$ is a perturbation of $q(z)$ of the form $p(z)=\e z^n+q(z)$, with $\e >0$.
Hence
\be\label{eq:special}
p(z) + \ol{q(z)} = \e z^n + q(z) + \ol{q(z)} = \e z^n + 2 \Re q(z).
\ee
We randomize the above harmonic polynomial \eqref{eq:special} by taking $q(z)$ as in \eqref{eq:KostlanComplex} to be randomly sampled from the complex Kostlan ensemble  of random polynomials.

Taking real and imaginary parts of the equation $p(z) + \ol{q(z)}=0$ then gives the following system of equations.
\be\label{eq:system}
\begin{cases}
\e \Re\{ z^n \} + 2\Re \{q(z)\} = 0, \\
\Im \{ z^n \}  = 0.
\end{cases}
\ee

The solution set of the second equation in \eqref{eq:system} consists of $n$ equally spaced lines through the origin, and hence the solutions of the system \eqref{eq:system} correspond to the zeros of the univariate polynomials obtained from restricting the first equation to each of these lines.

In order to restrict the equation $\e \Re\{ z^n \} + 2\Re \{q(z)\} = 0$ to each of these $n$ lines, we write $z=r e^{i \theta_j}$ with $r \in \RR$ and $\theta_j = \frac{j \pi}{n}$ for $j=0,1,2,...,n-1$.  This gives for each $j=0,1,2,...,n-1$
\be\label{eq:polar}
\e (-1)^n r^j + 2 \sum_{k=0}^m \left[ a_k \cos(k \theta_j) - b_k \sin(k\theta_j) \right] r^k = 0,
\ee
where $a_k, b_k$ are the independent real Gaussians appearing in the real and imaginary parts of the coefficients of $q$ as in \eqref{eq:ck}.
Let us multiply \eqref{eq:polar}  by $(-1)^j$ and abbreviate the resulting equations as
\be\label{eq:abbrv}
\e r^n + f_j(r) = 0, \quad j = 0,1,2,...,n-1,
\ee
where 
\be\label{eq:withcoeff}
f_j(r) = 2\sum_{k=0}^m \alpha_{k,j} r^k, \quad \alpha_{k,j}:= (-1)^j \left[ a_k \cos(k \theta_j) - b_k \sin(k\theta_j) \right].
\ee
Let $N_j$ denote the number of real solutions of \eqref{eq:polar}.
Since the event $p(0)+\ol{q(0)} = 0$ has zero probability, we have that the total number $N$ of solutions of the system \eqref{eq:system} is given by 
\be\label{eq:Nsum}
N = \sum_{j=0}^{n-1} N_j,
\ee 
almost surely.

The following lemma will play a key role in estimating the expectation of each $N_j$.

\begin{lemma}\label{lemma:restr}
For each $j$, the polynomial
$f_j$ appearing in \eqref{eq:abbrv} and defined in \eqref{eq:withcoeff} is distributed as (a constant multiple of) a real Kostlan polynomial of degree $m$.
\end{lemma}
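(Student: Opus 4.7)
The plan is to verify directly from the definition of $\alpha_{k,j}$ that, for each fixed $j$, the coefficients $\{\alpha_{k,j}\}_{k=0}^{m}$ are independent centered real Gaussians whose variances are proportional (with a constant proportionality factor independent of $k$) to $\binom{m}{k}$. Once this is established, comparison with \eqref{eq:KostlanReal} will identify $f_j$ as a constant multiple of a real Kostlan polynomial of degree $m$, and the precise constant can be read off from the variance normalization together with the explicit factor of $2$ in \eqref{eq:withcoeff}.

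First, I would fix $j$ and examine a single coefficient
\[
\alpha_{k,j} = (-1)^j\bigl[a_k\cos(k\theta_j) - b_k\sin(k\theta_j)\bigr].
\]
Since $a_k, b_k \sim N\bigl(0,\tfrac{1}{2}\binom{m}{k}\bigr)$ are independent, any real linear combination is again a centered real Gaussian, and the sign $(-1)^j$ does not affect the distribution. Computing the variance with $\cos^2(k\theta_j) + \sin^2(k\theta_j) = 1$ gives $\operatorname{Var}(\alpha_{k,j}) = \tfrac{1}{2}\binom{m}{k}$, regardless of $j$.

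Next I would address independence across $k$. For fixed $j$, the coefficient $\alpha_{k,j}$ is built entirely from the pair $(a_k, b_k)$, and distinct values of $k$ involve disjoint pairs. Since the collection $\{(a_k,b_k)\}_{k=0}^{m}$ is jointly Gaussian with independent entries, the coefficients $\alpha_{0,j}, \alpha_{1,j}, \dots, \alpha_{m,j}$ are mutually independent. Combined with the variance calculation, this says $\{\alpha_{k,j}\}$ has the same joint law as $\{\tfrac{1}{\sqrt{2}}\,\beta_k\}$ where $\beta_k \sim N\bigl(0,\binom{m}{k}\bigr)$ are independent, so
\[
f_j(r) = 2\sum_{k=0}^{m}\alpha_{k,j}\,r^k \stackrel{d}{=} \sqrt{2}\sum_{k=0}^{m}\beta_k\,r^k,
\]
which is $\sqrt{2}$ times a real Kostlan polynomial of degree $m$, as required.

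There is no genuine obstacle here; the result is essentially a short Gaussian algebra computation. The only minor point one should be careful with is that the claim concerns the distribution of $f_j$ for each fixed $j$ separately (not joint independence of the $f_j$ across $j$, which is false, since all of them are built from the same underlying Gaussians $a_k,b_k$). This is exactly the form needed to apply Lemma \ref{lemma:EK} to each $f_j$ and then use linearity of expectation in \eqref{eq:Nsum}.
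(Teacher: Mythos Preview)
Your proof is correct and follows essentially the same route as the paper: compute that each $\alpha_{k,j}$ is a centered Gaussian with variance $\tfrac{1}{2}\binom{m}{k}$ via $\cos^2+\sin^2=1$, note independence across $k$ from the disjointness of the underlying pairs $(a_k,b_k)$, and read off the scaling constant $\sqrt{2}$. Your closing remark distinguishing the marginal law of each $f_j$ from joint independence across $j$ is a welcome clarification that the paper leaves implicit.
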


\begin{proof}[Proof of Lemma \ref{lemma:restr}]
Fixing $j$, the random coefficients $\alpha_{k,j}$ in \eqref{eq:withcoeff} are independent of one another as $k$ varies. Indeed, this follows from the independence of $a_k,b_k$.
Further using the independence of $a_k,b_k$ along with the fact that they are centered Gaussians with variance $\frac{1}{2}\binom{m}{k}$, it follows from the sum law for Gaussians that
$\alpha_{k,j} = (-1)^j \left[ a_k \cos(k \theta_j) - b_k \sin(k\theta_j) \right] $ is distributed as a centered Gaussian with variance $\frac{1}{2}\binom{m}{k}\cos^2(k \theta_j) + \frac{1}{2} \binom{m}{k} \sin^2(k \theta_j) = \frac{1}{2} \binom{m}{k}$, i.e., we have $\alpha_{k,j} \sim N(0,\frac{1}{2}\binom{m}{k})$.  Equivalently, $\sqrt{2} \, \alpha_{k,j} \sim N(0,\binom{m}{k})$.
We conclude that $(\sqrt{2})^{-1}f_j$ is distributed as a real Kostlan polynomial as desired.
\end{proof}

In view of \eqref{eq:Nsum}, our goal can be reduced to proving the following lemma.

\begin{lemma}\label{lemma:expected}
Given arbitrary $\e_0 > 0$, the expectated number of solutions of \eqref{eq:polar} satisfies 
$$\EE N_j \geq \sqrt{m} - \e_0,$$
for any sufficiently small size $\e>0$ of the perturbation in \eqref{eq:polar}.
\end{lemma}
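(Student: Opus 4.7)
The plan is to compare $N_j$ with the number of real zeros of the unperturbed polynomial $f_j$ on a large but fixed interval: by Lemmas \ref{lemma:restr} and \ref{lemma:EK}, the expected number of such zeros can be made arbitrarily close to $\sqrt{m}$, and a small perturbation $\e r^n$ will, with probability one, only slightly displace each real zero of $f_j$ without destroying it. Fatou's lemma will then let us pass the resulting sample-wise inequality through the expectation.

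First, I would fix the truncation radius. By Lemma \ref{lemma:restr} and the fact that rescaling by a positive constant does not alter the zero set, Lemma \ref{lemma:EK} yields
$$
\EE\bigl[\#\{r \in (-R, R) : f_j(r) = 0\}\bigr] = \frac{2\sqrt{m}}{\pi}\arctan(R) \longrightarrow \sqrt{m} \text{ as } R \to \infty.
$$
So I would pick $R > 0$ large enough that this expectation is at least $\sqrt{m} - \e_0/2$.

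Second, I would carry out a sample-wise stability argument. Since the discriminant of $f_j$ is a nontrivial polynomial in its Gaussian coefficients, its vanishing locus has Lebesgue measure zero, hence almost surely every real zero $r_0$ of $f_j$ is simple, i.e., $f_j'(r_0) \neq 0$. Applying the implicit function theorem to $F(r, \e) := \e r^n + f_j(r)$ at each such $(r_0, 0)$ produces a smooth local branch $r_0(\e)$ of zeros defined for $\e$ in a (realization-dependent) neighborhood of $0$. For $\e > 0$ small enough depending on the sample, the branches emanating from the zeros of $f_j$ inside $(-R, R)$ remain distinct and stay inside $(-R, R)$, so almost surely
$$
\liminf_{\e \to 0^+} N_j \geq \#\{r \in (-R, R) : f_j(r) = 0\}.
$$

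Finally, I would interchange $\liminf$ and expectation via Fatou's lemma (applied along any countable sequence $\e_k \downarrow 0$, using $N_j \geq 0$) to obtain
$$
\liminf_{\e \to 0^+} \EE\, N_j \geq \EE\bigl[\#\{r \in (-R, R) : f_j(r) = 0\}\bigr] \geq \sqrt{m} - \e_0/2,
$$
from which $\EE\, N_j \geq \sqrt{m} - \e_0$ for all sufficiently small $\e > 0$. The main obstacle is precisely this interchange: the implicit function theorem only delivers a threshold for $\e$ that depends on the realization of $f_j$, so one cannot evaluate at a uniform small $\e$ before averaging. Fatou's lemma is the clean resolution because we only need a lower bound on $\liminf_{\e \to 0^+} \EE\, N_j$, and the count $N_j$ is trivially nonnegative.
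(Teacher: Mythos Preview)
Your argument is correct and takes a genuinely different route from the paper's. Both proofs start by choosing $R$ large so that $\EE N_{f_j}(-R,R)$ is close to $\sqrt m$, but they diverge in how the perturbation is controlled. The paper invokes a quantitative form of Bulinskaya's lemma (Lemma~\ref{lemma:Bulinskaya}) to find, for any $\delta>0$, a deterministic $\tau>0$ such that $\min_{[-R,R]}\max\{|f_j|,|f_j'|\}\geq\tau$ with probability at least $1-\delta$; on that event a deterministic stability lemma (Lemma~\ref{lemma:deterministic}) guarantees $N_j\geq N_{f_j}(-R_0,R_0)$ once $\e R^n\leq\tau/2$, and the proof finishes with a conditional-expectation computation. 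In particular the paper produces a \emph{uniform} threshold for $\e$ (depending only on $R,\tau,n$), at the cost of importing an auxiliary result on Gaussian fields. Your approach replaces both Lemmas~\ref{lemma:Bulinskaya} and~\ref{lemma:deterministic} by the elementary observation that the discriminant locus has measure zero, so the implicit function theorem gives sample-wise persistence of each real zero; the price is that the persistence threshold is realization-dependent, which you correctly handle with Fatou's lemma. Your version is shorter and self-contained, while the paper's version is more quantitative and would extend more readily to situations where one needs an explicit $\e$ or a bound that is uniform over an event of controlled probability.
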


Before proving the lemma, let us see how it is used to complete the proof of the theorem.
By linearity of expectation and \eqref{eq:Nsum}, we have that the expected number of zeros of $p(z) + \ol{q(z)}$ is given by the sum over $j$ of the expected number of solutions of each equation \eqref{eq:polar}, i.e.,
$$\EE N = \sum_{j=0}^{n-1} \EE N_j.$$
Together with Lemma \ref{lemma:expected} this gives
\be
\EE N \geq n (\sqrt{m} -  \e_0).
\ee
Since the valence of $p(z) + \ol{q(z)}$ is an integer, this estimate for the expectation implies the existence of examples with valence
$\lceil n (\sqrt{m}-\e_0) \, \rceil$.
Finally, we notice that $\lceil n (\sqrt{m}- \e_0) \, \rceil$ equals $\lceil n \sqrt{m} \, \rceil$ when $\e_0>0$ is sufficiently small, as the reader can easily verify while separating the case when $\sqrt{m}$ is an integer.

It remains to prove Lemma \ref{lemma:expected}.

We will use the following quantitative version of Bulinskaya's Lemma which follows from a more general result on Gaussian random fields \cite[Lemma 7]{NazarovSodin2}.
\begin{lemma}
\label{lemma:Bulinskaya}
Let $f$ be a random polynomial sampled from the real Kostlan ensemble.
Given $R>0$, $\delta>0$ there exists $\tau>0$ such that
\be
\PP \left\{ \min_{[-R,R]} \max \{|f(x)|,|f'(x)| \} < \tau \right\} < \delta .
\ee
\end{lemma}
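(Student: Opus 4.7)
The plan is to combine a discretization of $[-R,R]$ with two Gaussian inputs: a high-probability uniform bound on $\sup_{[-R,R]}|f''|$, and a pointwise upper bound on the joint density of $(f(x),f'(x))$ at the origin. This is the classical Bulinskaya non-degeneracy argument made quantitative, and is exactly the mechanism underlying the Nazarov--Sodin lemma cited in the text; my goal would be to check that the Kostlan ensemble satisfies all the required inputs in closed form.

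The first step is to compute the relevant covariance structure. The Kostlan kernel is $K(x,y)=\EE[f(x)f(y)]=(1+xy)^m$, and routine differentiation at $y=x$ yields
$$
\EE[f(x)^2]=(1+x^2)^m, \quad \EE[f(x)f'(x)]=mx(1+x^2)^{m-1}, \quad \EE[f'(x)^2]=m(1+x^2)^{m-2}(1+mx^2).
$$
Hence the covariance matrix $\Sigma(x)$ of $(f(x),f'(x))$ has determinant $\det\Sigma(x)=m(1+x^2)^{2m-2}\ge m$, so its density at the origin is bounded above uniformly in $x\in\RR$ by a constant $C_0=C_0(m)$. In particular, for any $\sigma_1,\sigma_2>0$,
$$
\PP(|f(x)|<\sigma_1,\ |f'(x)|<\sigma_2)\le C_0\,\sigma_1\sigma_2 \quad \text{for all } x\in\RR.
$$

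The second step is the discretization. Fix $M>0$ and restrict attention to the event $E_M=\{\sup_{[-R,R]}|f''|\le M\}$. If some $x^*\in[-R,R]$ realizes $\max\{|f(x^*)|,|f'(x^*)|\}<\tau$, then by Taylor expansion the nearest grid point $x_i$ of a mesh of spacing $h$ satisfies
$$
|f(x_i)|\le\tau+\tfrac{h}{2}\tau+\tfrac{h^2}{8}M, \qquad |f'(x_i)|\le\tau+\tfrac{h}{2}M.
$$
Choosing $h=\sqrt{\tau}$, for all sufficiently small $\tau$ the right-hand sides are bounded by $C_1(M)\,\tau$ and $C_2(M)\,\sqrt{\tau}$ respectively. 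A union bound over the $O(R/\sqrt{\tau})$ grid points, combined with the density estimate from the first step, gives
$$
\PP\!\left(\bigl\{\min_{[-R,R]}\max\{|f|,|f'|\}<\tau\bigr\}\cap E_M\right)\le C(R,M,m)\,\tau.
$$

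The third step disposes of $E_M^c$. Since $f''$ is itself a centered Gaussian process whose variance $\EE[f''(x)^2]=\partial_x^2\partial_y^2 K(x,y)|_{y=x}$ is continuous in $x$ and hence bounded on $[-R,R]$, the Borell--TIS inequality (or a crude Sobolev-type bound $\sup|f''|\lesssim \|f''\|_{L^2}+\|f'''\|_{L^2}$ combined with Markov) gives $\PP(E_M^c)\to 0$ as $M\to\infty$. Choosing $M$ so that $\PP(E_M^c)<\delta/2$, and then $\tau$ so small that $C(R,M,m)\tau<\delta/2$, yields the lemma. The only genuine technical hazard is maintaining uniform control on the joint density of $(f(x),f'(x))$ across $[-R,R]$, but the closed form $\det\Sigma(x)=m(1+x^2)^{2m-2}$ specific to the Kostlan ensemble makes this transparent, so the argument reduces to bookkeeping around the Taylor expansion and the Gaussian tail of $\sup|f''|$.
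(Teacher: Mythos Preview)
The paper does not actually prove this lemma; it simply states it as a consequence of \cite[Lemma~7]{NazarovSodin2} and moves on. Your proposal supplies a correct, self-contained proof that unpacks precisely the Nazarov--Sodin mechanism: uniform non-degeneracy of the Gaussian vector $(f(x),f'(x))$, a mesh/union-bound reduction via Taylor expansion, and a Gaussian tail bound on $\sup_{[-R,R]}|f''|$. Your explicit computation $\det\Sigma(x)=m(1+x^2)^{2m-2}\ge m$ is correct and is exactly the ensemble-specific input one needs to invoke the general lemma, so in effect you have both verified the hypotheses of the cited result and reproduced its proof in the Kostlan case. There is nothing to compare against in the paper itself; your argument is sound, and the only remark is that the third step can be made even more elementary here since $f''$ is a fixed linear combination of finitely many independent Gaussians, so $\sup_{[-R,R]}|f''|$ is dominated by a finite sum $\sum_k |\alpha_k|\,c_k(R)$ and Markov's inequality already suffices without Borell--TIS.
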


We will also need the following elementary deterministic result.

\begin{lemma}\label{lemma:deterministic}
Given $R_0>0$ and letting $R=R_0+1$,
suppose $f:\RR \rightarrow \RR$ is a $C^1$-smooth function satisfying
\be\label{eq:stable}
\min_{[-R,R]} \max \{|f(x)|,|f'(x)| \} \geq \tau,
\ee
for some $\tau>0$.
Suppose further that $g:\RR \rightarrow \RR$ is a continuous function satisfying
\be 
\sup_{[-R,R]} |g(x)| \leq \tau/2.
\ee
Then we have 
\be
N_{f+g}(-R,R) \geq N_{f}(-R_0,R_0).
\ee
\end{lemma}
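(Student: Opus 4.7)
The plan is to locate, for each zero of $f$ in $(-R_0, R_0)$, a disjoint closed sub-interval of $(-R, R)$ across whose endpoints $f$ takes the values $\pm \tau/2$ with opposite signs; the perturbation bound $\sup |g| \leq \tau/2$ then guarantees that $f+g$ also changes sign across this sub-interval, so the intermediate value theorem produces a zero of $f+g$ there. Summing over disjoint sub-intervals will give the required inequality.

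The construction of these sub-intervals goes as follows. At any zero $z \in (-R_0, R_0)$ of $f$, the hypothesis \eqref{eq:stable} forces $|f'(z)| \geq \tau$ (since $f(z) = 0$). Let $I_z \subseteq [-R, R]$ be the maximal open sub-interval containing $z$ on which $|f| < \tau/2$. On $I_z$ one has $|f'| \geq \tau$, and since $f'$ is continuous and nonvanishing there, $f$ is strictly monotone on $I_z$ with $z$ as its unique zero. Using $|f| < \tau/2$ on $I_z$ together with $|f'| \geq \tau$, the fundamental theorem of calculus yields $|I_z| \leq 1$. Because $z \in [-R_0, R_0]$ and $R = R_0 + 1$, this length bound forces the closure $\overline{I_z}$ to lie strictly inside $(-R, R)$. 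Writing $I_z = (\alpha_z, \beta_z)$, the continuity of $f$ combined with the maximality of $I_z$ gives $|f(\alpha_z)| = |f(\beta_z)| = \tau/2$, and the monotonicity of $f$ on $\overline{I_z}$ with $f(z) = 0$ in the interior forces these two boundary values to have opposite signs.

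To conclude, observe that the intervals $[\alpha_z, \beta_z]$ corresponding to distinct zeros $z$ of $f$ are pairwise disjoint, since each contains a unique zero. Without loss of generality $f(\alpha_z) = -\tau/2$ and $f(\beta_z) = \tau/2$; adding $g$ with $|g| \leq \tau/2$ then gives $(f+g)(\alpha_z) \leq 0 \leq (f+g)(\beta_z)$, so the intermediate value theorem supplies a zero of $f+g$ in each closed interval $[\alpha_z, \beta_z] \subset (-R, R)$. Summing over all zeros $z$ of $f$ in $(-R_0, R_0)$ yields $N_{f+g}(-R, R) \geq N_f(-R_0, R_0)$. The only subtle step is the length estimate $|I_z| \leq 1$, which is precisely why the lemma requires the slightly enlarged interval $[-R, R] = [-R_0 - 1, R_0 + 1]$ for $f+g$ in place of $[-R_0, R_0]$ for $f$; everything else is a direct combination of the intermediate value theorem with the triangle inequality.
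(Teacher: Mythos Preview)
Your argument is correct and follows essentially the same route as the paper's: isolate each zero of $f$ inside a sub-interval on which $|f'|\geq\tau$, check that the sub-interval stays inside $(-R,R)$, read off opposite signs of $f$ at the endpoints, and apply the intermediate value theorem to $f+g$.

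One small point worth tightening: you define $I_z$ via the threshold $|f|<\tau/2$, whereas the paper uses $|f|<\tau$. With the paper's threshold, $|f|=\tau$ at the endpoints forces $|f+g|\geq\tau/2>0$ there, so the zero of $f+g$ lies in the \emph{open} interval $I_k$; disjointness of the open intervals then immediately gives distinct zeros of $f+g$. With your threshold $\tau/2$, the endpoint values satisfy only $|f+g|\geq 0$, so the IVT zero of $f+g$ could sit at an endpoint, and your justification ``since each contains a unique zero'' only shows the \emph{open} $I_z$ are disjoint, not the closures. The closures are in fact disjoint (if two shared an endpoint $p$, then $|f(p)|=\tau/2<\tau$ forces $|f'(p)|\geq\tau$, while continuity of $f'$ together with the opposite monotonicities on the two abutting intervals forces $f'(p)=0$, a contradiction), so your claim stands; but either add this one-line argument or switch to the threshold $\tau$ to make the counting airtight.
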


\begin{remark}
The lower bound in Lemma \ref{lemma:deterministic} suits our purposes, but a tighter result holds for $C^1$-small perturbations;
the same conditions \eqref{eq:conditions} on $f$ additionally guarantee an upper bound on the number of zeros of $f+g$ in $(-R_0,R_0)$ under the assumption that $g$ is $C^1$-smooth and sufficiently $C^1$-small (this is an instance of the \emph{transversality and stability principle} from differential topology).
\end{remark}

\begin{proof}[Proof of Lemma \ref{lemma:deterministic}]
Suppose \eqref{eq:stable} holds, and let $x_1,x_2,...,x_\ell$ denote the zeros of $f$ in the interval $(-R_0,R_0)$.  Notice that $f$ has finitely many such zeros.  Otherwise, there is an accumulation point $x_0$ of zeros in the interval $[-R_0,R_0]$ where we have $f(x_0)=0$ by continuity and $f'(x_0)=0$ since $f$ is $C^1$-smooth, contradicting the condition \eqref{eq:stable}.

The set  $S_\tau := \{ x\in (-R,R) :  |f(x)|<\tau \}$ is a union of disjoint open intervals $I_k=(a_k,b_k)$.
The condition \eqref{eq:stable} 
guarantees that  $|f'(x)| \geq \tau$ on $S_\tau$,
and by continuity of $f'$ we have on each interval $I_k$ either $f'(x) \geq \tau$ or $f'(x) \leq -\tau$.  In particular, $f$ is strictly monotone on each interval and has at most one zero in $I_k$.
Each of the zeros $x_1,x_2,...,x_\ell$ is contained in $S_\tau$ and hence is in one of the intervals $I_k$.
 Relabeling indices if necessary, we let $I_k=(a_k,b_k)$ denote the interval containing the zero $x_k$ for each $k=1,2,...,\ell$.

Since $|f'(x)| \geq \tau$ on $I_k$, the mean value theorem implies that $b_k \leq x_k+1$ and $a_k \geq x_k-1$, which implies $-R=-R_0-1 < a_k < b_k < R_0+1=R$ for each $k=1,2,...,\ell$.
This means the endpoints of each interval are contained in $(-R,R)$.

We have the following conditions at the endpoints of each interval
\be\label{eq:conditions}
\begin{cases}
|f(a_k)|=|f(b_k)|=\tau, \\
f(b_k)=-f(a_k)
\end{cases}.
\ee
Indeed, the first condition
$|f(a_k)|=|f(b_k)|=\tau$ follows from the fact that $a_k,b_k$ are in $(-R,R) \setminus S_\tau$ along with the continuity of $f$ and the definition of $S_\tau$.
The second condition $f(b_k) = - f(a_k)$ then follows from the strict monotonicity of $f$ over each interval $I_k$.

By the intermediate value property, the conditions \eqref{eq:conditions}
imply that for any continuous function $g$ satisfying $\sup_{[-R,R]} |g(x)| \leq \tau/2$, we have $f+g$ has a zero in each of the intervals $I_k$.
Recalling that the intervals $I_k$ are disjoint and contained in $(-R,R)$, this implies that $N_{f+g}(-R,R) \geq N_{f}(-R_0,R_0)$ and completes the proof of the lemma.
\end{proof}

\begin{proof}[Proof of Lemma \ref{lemma:expected}]
Let $\e_1>0$ and $\delta>0$ be arbitrary.
By Lemma \ref{lemma:restr} and Lemma \ref{lemma:EK}, we have
$$\EE N_{f_j}(a,b) = \sqrt{m}\frac{\left(\arctan(b)-\arctan(a) \right)}{\pi},$$
where $N_{f_j}(a,b)$ denotes the number of zeros of $f_j$ over the interval $(a,b)$.
Hence,  for all $R_0>0$ sufficiently large we have
\be\label{eq:KRest}
\EE N_{f_j}(-R_0,R_0) \geq \sqrt{m} - \e_1.
\ee

Let $R=R_0+1$, and let $A$ denote the event that the condition \eqref{eq:stable} holds with $f_j$ playing the role of $f$.

Applying Lemma \ref{lemma:Bulinskaya}, we find $\tau>0$ such that the probability of the event $A$ satisfies
\be\label{eq:probA}
\PP (A) \geq 1 - \delta.
\ee

The supremum over $r \in [-R,R]$ of the perturbation appearing in \eqref{eq:abbrv} satisfies 
$$ \sup_{[-R,R]} | \e  r^n | \leq \tau/2$$ 
for all $\e>0$ sufficiently small.
With such choice of $\e$, if the event $A$ occurs, then we can apply Lemma \ref{lemma:deterministic} (with $f_j$ playing the role of $f$ and $\e r^n $ playing the role of $g$) to conclude 
$N_j \geq N_{f_j}(-R_0,R_0)$.
Hence, conditioning on the event $A$ and taking expectations, we have
\be\label{eq:conditional}
\underset{A}{\EE} N_j \geq \underset{A}{\EE} N_{f_j}(-R_0,R_0),
\ee
where $\underset{A}{\EE}$ denotes conditional expectation.

We have
\be
\EE N_{f_j}(-R_0,R_0) = \PP(A) \underset{A}{\EE} N_{f_j}(-R_0,R_0) + (1-\PP(A)) \underset{A^c}{\EE} N_{f_j}(-R_0,R_0),
\ee
which implies
\begin{align*}
    \underset{A}{\EE} N_{f_j}(-R_0,R_0) &= \frac{1}{\PP(A)} \left( \EE N_{f_j}(-R_0,R_0) - (1-\PP(A)) \underset{A^c}{\EE} N_{f_j}(-R_0,R_0) \right) \\
\text{(by \eqref{eq:probA})} \quad     &\geq \EE N_{f_j}(-R_0,R_0) - \delta \underset{A^c}{\EE} N_{f_j}(-R_0,R_0) \\
\quad      &\geq \EE N_{f_j}(-R_0,R_0) - \delta m,
\end{align*}
where in the final line above we have used that the number of real zeros of a degree-$m$ polynomial is at most $m$.
Together with \eqref{eq:KRest} the above inequalities imply
\be\label{eq:N_jA}
\underset{A}{\EE} N_{f_j}(-R_0,R_0) \geq \sqrt{m} - \e_1 - \delta m.
\ee

We have 
\begin{align*}
    \EE N_j &= \PP(A) \underset{A}{\EE} N_j + (1-\PP(A))\underset{A^c}{\EE} N_j \\
    &\geq \PP(A) \underset{A}{\EE} N_j \\
\text{(by \eqref{eq:probA})} \quad      &\geq (1-\delta) \underset{A}{\EE} N_j.
\end{align*}
Together with \eqref{eq:conditional} and \eqref{eq:N_jA} this implies
\be
\EE N_j \geq (1-\delta) \left(\sqrt{m} - \e_1 - \delta m \right).
\ee
Since $\delta>0$ and $\e_1>0$ were arbitrary, it follows that for any $\e_0>0$ we have $\EE N_j \geq \sqrt{m} - \e_0$ for any sufficiently small size $\e>0$ of the perturbation appearing in \eqref{eq:polar}.  This verifies the lemma and hence completes the proof of the theorem.
\end{proof}

\section{Concluding remarks}\label{sec:concl}

\subsection{Upper bound for the number of zeros of $\e z^n + q(z) + \ol{q(z)}$} After the initial discovery of counterexamples to Wilmshurst's conjecture, it was conjectured in \cite[Conj. 1.4]{LLL} that $N(n,m) \leq 2m(n-1)+n$.  So far, there has been no progress on this conjecture, but we note that it is easy to verify for the special class of polynomials we considered in the proof of Theorem \ref{thm:main}, namely polynomials of the form $\e z^n + q(z) + \ol{q(z)}$.
Indeed, this follows from an application of Descartes' rule of signs as we explain below.
We may assume $m \geq 2$ since the case $m=1$ follows from the result of Khavinson and Swiatek \cite{KN} mentioned in the introduction.
Additionally, we may assume $n \geq 4$ since the case $n=3$ and $m=2$ (this is the only case to consider with $n<4$ since we are assuming $n>m \geq 2$) follows from Wilmshurst's theorem (the Bezout bound).
Apply Descartes' rule of signs to the real polynomials $\e r^n + f_j(r)$ obtained as before by restricting $\e z^n + 2 \Re \{ q(z) \}$ to each of the $n$ lines $\{\Im z^n = 0 \}$.  Fix $j$ and let $s_+$ and $s_-$ denote the number of sign changes of coefficients of $f_j(r)$ and $f_j(-r)$ respectively. Let $s_0$ denote the multiplicity of $0$ as a root, i.e., $s_0$ is the minimal degree of the monomials appearing in $f_j(r)$ (we will have $s_0=0$ in the generic case that $0$ is not a root). Since $f_j$ is a degree-$m$ polynomial, we have $s_+ + s_- + s_0 \leq m$ (this is an elementary fact which follows from noticing that the only pairs of consecutive monomials that may contribute to both counts $s_+$ and $s_-$ must have a decrement in the exponent of at least two).  Then the number of sign changes of the coefficients of $\e r^n+f_j(r)$ is at most $s_+ + 1$, and the number of sign changes of the coefficients of $\e (-r)^n + f_j(-r)$ is at most $s_- +1$.  The multiplicity $s_0$ of $0$ as a root is unchanged with the additional term $\pm \e r^n$. This yields by Descartes' rule of signs that there are at most $m+2$ many zeros on each of the $n$ lines for a total of at most $n(m+2)$ many zeros.  Recalling that we have reduced to the case $n\geq 4$ and $m \geq 2$, we further notice that $n(m+2)$ is at most $2m(n-1)+n$.
Namely, we have $(m-1)(n-2) \geq 2 $ which implies $m(n-1) \geq m + n$, and adding $m(n-1)+n$ to each side of this inequality, we arrive at $n(m+2) \leq 2m(n-1)+n$ as desired.

\subsection{Derandomize the proof of Theorem \ref{thm:main}}
As with any non-constructive existence proof based on probabilistic methods, a natural open problem stemming from the current work is to choose $q$ strategically, rather than sampling $q$ randomly, in order to produce a large number of zeros of $h(z) = \e z^n + q(z) + \ol{q(z)}$.
In particular, beyond giving a constructive proof of Theorem \ref{thm:main}, it would be interesting to find examples that saturate the above upper bound coming from Descartes' rule of signs, i.e., construct examples of the form $h(z) = \e z^n + q(z) + \ol{q(z)}$ having asymptotically $m \cdot n$ many zeros.
Of course, one can also try to show this while continuing to use a probabilistic approach, namely, by selecting an alternative ensemble of random polynomials from which to sample $q$.

\subsection{Linearity of expectation and the probabilistic lens}\label{sec:linearity}
The main thrust of the proof of Theorem \ref{thm:main} hinges on linearity of expectation.
Restricting $\Re h$ to the various lines through the origin gives rise to \emph{dependent} random polynomials, i.e., the random coefficients $a_{k,j}$ defined in \eqref{eq:withcoeff} are highly dependent as $j$ varies with $k$ fixed.  A pillar of the probabilistic lens is that one can often ignore such dependence thanks to the linearity of expectation (this point is beautifully demonstrated in \cite{Alon} through many combinatorial examples).
In the application presented above, it is remarkable that this technique generates $n\lceil \sqrt{m} \rceil$ many desirable events (each event in this case being the occurrence of a zero) while working within a low-dimensional parameter space---the  polynomials $h$ only have $2m+3$ real parameters.  It seems likely that this technique holds further potential for generating examples in other extremal problems, particularly where the number of parameters is much smaller than the number of instances of desired behavior.

\bibliographystyle{abbrv}
\bibliography{RandHarmonic}

\end{document}